\documentclass[10pt,oneside]{amsart}
\usepackage{amssymb, amscd, amsmath, amsthm, latexsym, hyperref,ams refs}
\usepackage{pb-diagram, fancyhdr, graphicx, psfrag, enumerate}
\usepackage[text={6.3in,8.5in},centering,letterpaper,dvips]{geometry}
\newcommand{\compactlist}{\begin{list}{$\bullet$}{\setlength{\leftmargin}{1em}}}

\def\Z{{\mathbb Z}}
\def\F{{\mathbb F}}

\def\Q{{\mathbb Q}}

\def\co{\colon\!}

\def\calp{\mathcal{P}}

\def\calc{\mathcal{C}}
\def\calm{\mathcal{M}}
\def\calg{\mathcal{G}}

\def\calt{\mathcal{T}}
\def\cals{\mathcal{S}}
\def\calq{\mathcal{Q}}
\def\cs{\mathbin{\#}}
\DeclareMathOperator{\cfk}{\rm CFK}
\newcommand{\spinc}{\ifmmode{{\mathfrak s}}\else{${\mathfrak s}$\ }\fi}
\newcommand{\spinct}{\ifmmode{{\mathfrak t}}\else{${\mathfrak t}$\ }\fi}

\newcommand{\fig}[2] { \includegraphics[scale=#1]{#2} }

\DeclareMathOperator*{\Moplus}{\text{\raisebox{0.25ex}{\scalebox{0.7}{$\bigoplus$}}}}



\newtheorem{theorem}{Theorem}
\newtheorem{lemma}[theorem]{Lemma}
\newtheorem{corollary}[theorem]{Corollary}
\newtheorem{conjecture}[theorem]{Conjecture}

\theoremstyle{definition}

\begin{document}
\title{Primary decompositions of knot concordance}
\author{Charles Livingston}
\thanks{This work was supported by a grant from the National Science Foundation, NSF-DMS-1505586.}
\address{Charles Livingston: Department of Mathematics, Indiana University, Bloomington, IN 47405 }
\email{livingst@indiana.edu}


\begin{abstract} Jerome Levine defined for all $n>0$ a homomorphism from the smooth concordance group of knots in dimension $2n+1$ to an algebraically defined group $\calg^\Q$.  This algebraic concordance group splits as a direct sum of groups indexed by polynomials.  For $n>1$ the homomorphism is injective.  This leads  to what is called a primary decomposition theorem.  In the classical dimension, the   kernel of this homomorphism includes the smooth concordance group of topologically slice knots, and Jae Choon Cha has begun studying possible primary decompositions of this subgroup.  Here we will show that   primary decompositions of a strong type cannot exist.

In more detail, it is shown that there exists  a topologically slice  knot $K$ for which there is a factorization of its  Alexander polynomial, $\Delta_K(t)  = f_1(t)f_2(t)$, where $f_1$ and $f_2$ are relatively prime and each is the Alexander polynomial of a topologically slice knot, but $K$ is not smoothly concordant to any connected sum $K_1 \cs K_2$  for which $\Delta_{K_i}(t) =  f_i(t)^{n_i}$ for any nonnegative integers $n_i$.      
\end{abstract}

\maketitle


\section{Introduction}

A central problem in  three-dimensional  knot concordance theory in the smooth category consists of understanding  $\calt$, the concordance group of topologically slice knots.  Freedman~\cites{MR1201584,MR679066} proved that the subgroup $\calt^1$ generated by knots with Alexander polynomial one satisfies $\calt^1 \subset \calt$.  Early work proving that $\calt^1$ is nontrivial includes that of Akbulut, Casson, and Cochran-Gompf~\cite{MR976591}; in~\cite{MR1334309}  it was shown that $\calt^1$ contains an infinitely generated free subgroup.  Using  Heegaard Floer theory, in~\cite{MR3466802, MR2955197} it was shown that $\calt / \calt^1$ contains an infinitely generated free subgroup and infinite two-torsion.  

Recently, Jae Choon Cha~\cite{arxiv:1910.14629}  has undertaken an in-depth investigation of  {\it primary decompositions} of $\calt$.   
One motivation for studying primary decompositions arises from Levine's work~\cite{MR0253348} in which it was shown that for all integers $n>0$ there is a homomorphism from the smooth concordance group of knotted $2n-1$ spheres in $S^{2n+1}$ to a group called the  {\it rational algebraic concordance group}:    $\psi_{2n-1} \co \calc_{2n-1} \to \calg^\Q$.  It was also proved that there is a decomposition $\calg^\Q \cong \oplus_{p\in \mathcal{A}}  \calg^\Q_p$, where $\mathcal{A}$ is the set of irreducible Alexander polynomials.  Such a decomposition does not exist  using integer rather than  rational coefficients, but the failure was completely analyzed by Stoltzfus~\cite{MR0467764}.  In all odd higher dimensions $\psi_{2n-1}$ is injective, leading to decomposition theorems for knot concordance groups.  In the classical dimension, $n=1$, the map $\psi_1$ is not injective~\cite{MR900252}; the kernel is infinitely generated, contained the subgroup $\calt$ of topologically slice knots.

To briefly summarize the perspective of Cha's work,  we  let  $\calq \subset \Z[t]$ be the set of irreducible  polynomials  $q(t)$ satisfying $q(1) = 1$.    Let 
$$\calp = \{ q(t)q(t^{-1})  \in \Z[t,t^{-1}] \ \big| \ q(t) \in \calq\}.$$  According to Fox and Milnor~\cite{MR0211392}, if a knot $K$ is smoothly  slice, then its Alexander polynomial is a product of elements in $\calp$.  The same result holds for topologically locally flat slice knots, as proved   using the existence of normal bundles for locally flat disks (see Freedman-Quinn~\cite[Section 9.3]{MR1201584}).
According to Terasaka~\cite{MR0117736}, every product of elements in $\calp$ is the Alexander polynomial of some slice knot.

Given any subset $\calp_0 \subset \calp$,  let $\calt^{\calp_0} \subset \calt$ denote the subgroup generated by topologically slice knots with Alexander polynomial a product of polynomials  $p$ for  $p \in \calp_0$.  In the case that $\calp_0$ is a singleton  $\{p\}$, we write $\calt^p$.  Hence, as above, $\calt^1$ denotes the subgroup generated by knots with Alexander polynomial one.    Notice that for any pair of elements   $p, q  \in \calp$, we have $\calt^1 \subset \calt^p \cap \calt^q$.  Thus, in the following conjecture  it is necessary to consider the quotients $\calt^p_\Delta = \calt^p / \calt^1$ and $\calt_\Delta = \calt / \calt^1$.  

\begin{conjecture} The canonical homomorphisms    $\calt^p_\Delta \to \calt_\Delta$  induce an isomorphism  $$\Phi \co \Moplus_{p \in \calp}\calt^p_\Delta \to \calt_\Delta.$$ 
\end{conjecture}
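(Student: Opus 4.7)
The plan is to establish the conjectured isomorphism by proving surjectivity and injectivity of $\Phi$ separately, matching the template of Levine's primary decomposition in higher dimensions while grappling with the extra subtlety introduced by $\calt$.

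For surjectivity, I would start with a topologically slice representative $K$ of a class in $\calt_\Delta$. By Fox--Milnor, its Alexander polynomial factors as $\Delta_K(t) = \prod_{i=1}^{r} p_i(t)^{n_i}$ for distinct $p_i \in \calp$. Using Terasaka's theorem I would fix, for each $i$, a topologically (in fact smoothly) slice knot $J_i$ with $\Delta_{J_i} = p_i^{n_i}$ and consider $J = J_1 \cs \cdots \cs J_r$, so $[J]$ lies in the image of $\Phi$. The task reduces to showing $K \cs (-J) \in \calt^1$. My approach would be to lift the primary decomposition of the rational Alexander module of $K$ to a geometric splitting: the Blanchfield pairing on the Alexander module of $K$ decomposes orthogonally over the $p_i$-primary parts, and I would try to realize each summand by a topologically slice knot built from $K$ via carefully chosen infection or satellite operations, and then cancel all polynomial-one error terms using the fact that such terms are absorbed by the quotient $\calt_\Delta = \calt/\calt^1$.

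For injectivity, suppose $[K_1] + \cdots + [K_k] = 0$ in $\calt_\Delta$, where $K_i \in \calt^{p_i}$ for pairwise distinct $p_i \in \calp$; equivalently, $K_1 \cs \cdots \cs K_k \in \calt^1$. The Alexander module of the connected sum splits as an orthogonal direct sum of $p_i$-primary pieces with respect to the Blanchfield form. Since the connected sum lies in $\calt^1$ and knots in $\calt^1$ have metabolic Blanchfield form (their Alexander polynomials being norms in $\Z[t,t^{-1}]$), and because distinct $p_i$ are coprime, any such metabolizer must respect the primary decomposition. Consequently the Blanchfield form of each $K_i$ is itself metabolic; each $K_i$ is algebraically slice, and in fact its algebraic concordance class is concentrated on the $p_i$-primary summand. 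The concluding step would be to upgrade this algebraic statement into a geometric concordance from $K_i$ to a knot in $\calt^1$.

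The main obstacle is this final geometric step in both directions. Algebraic primary decomposition of the Blanchfield form is classical, but translating it into geometric statements modulo $\calt^1$ requires a Levine-style surgery argument that is known to fail in the classical dimension; indeed, the subgroup $\calt$ is essentially the measure of that failure. One would need genuinely new tools beyond Casson--Gordon invariants, $L^2$-signatures, and current Heegaard Floer-theoretic invariants, each of which either vanishes on all of $\calt$ or does not split along the primary decomposition of $\Delta$. The main theorem of the present paper, exhibiting a topologically slice knot whose Alexander polynomial factors into coprime pieces but which is not concordant to any connected sum respecting that factorization, shows that the strong decomposition fails outright at the level of $\calt$; whether the failure is exactly absorbed by passing to $\calt_\Delta$ is precisely the open content of the conjecture, and the plan above would succeed only modulo the construction of such new primary-sensitive invariants of topologically slice knots.
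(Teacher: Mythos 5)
The statement you are addressing is a \emph{conjecture}: the paper offers no proof of it, and in fact its main theorems are constructed as evidence \emph{against} it (specifically against the surjectivity of $\Phi$, via the failure of the ``strong existence'' property). So there is no paper proof to compare against, and your proposal---which candidly stops short of closing its own gaps---is not a proof either. That said, the gaps are worth naming precisely, because two of your intermediate reductions are not merely incomplete but broken.

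For surjectivity: choosing the $J_i$ to be \emph{smoothly} slice via Terasaka makes the reduction circular. If each $J_i$ is smoothly slice then $[J]=0$ in $\calt_\Delta$, so ``$K \cs (-J) \in \calt^1$'' is equivalent to ``$K \in \calt^1$,'' which is false for general topologically slice $K$ (this is exactly the nontriviality of $\calt_\Delta$ established in the Hedden--Livingston--Ruberman work the paper cites). What surjectivity actually requires is finding knots $K_i \in \calt^{p_i}$, not necessarily slice, with $K \cs (-K_1) \cs \cdots \cs (-K_r) \in \calt^1$; producing such $K_i$ is the entire content of the problem, and the paper's Theorems 1 and 2 show that the naive candidates (connected summands realizing the primary factorization of $\Delta_K$ on the nose) need not exist even up to concordance. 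For injectivity: the Blanchfield-form argument is vacuous on $\calt$, because every topologically slice knot is already algebraically slice; the primary decomposition of the algebraic concordance group sees nothing, since $\calt$ lies in the kernel of Levine's homomorphism $\psi_1$. So the ``algebraic statement'' you propose to upgrade carries no information to begin with. Your closing paragraph correctly identifies that genuinely new primary-sensitive invariants would be needed, but that admission is the whole ballgame: as written, neither half of the argument gets off the ground, and the paper you are reading is an argument that at least the surjectivity half likely cannot.
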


In~\cite{arxiv:1910.14629}, Cha identifies and studies a specific  infinite set $\calp_0 \subset \calp$ with two properties:  first, for all $p \in \calp_0$, he proves that   $\calt^p_\Delta$ contains an infinitely generated free subgroup $\cals^p_\Delta$; second, he proves that the  restriction of $\Phi$ is injective on $\oplus_{p \in \calp^0} \cals^p_\Delta$.  

The main goal of this paper is to provide a counterexample to a    splitting property related to the surjectivity of $\Phi$,  considered by Cha under the name {\it  strong existence} (see \cite[Appendix A]{arxiv:1910.14629}). 
Although this does not provide a counterexample to the conjecture, it adds to the evidence  that the conjecture is false.  More specifically, it indicates  that $\Phi$ is probably not surjective.

\begin{theorem}\label{thm:main} There exists a set of three polynomials, $\calp_0 = \{f_1, f_2,f_3\} \subset \calp$, such that the natural homomorphism  $$\calt^{f_1}_{\Delta}\oplus \calt^{f_2}_{\Delta} \oplus \calt^{f_3}_{\Delta} \to \calt^{\calp_0}_\Delta$$ is not surjective.
\end{theorem}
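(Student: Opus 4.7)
The plan is to exhibit an explicit topologically slice knot $K \in \calt^{\calp_0}$ together with an additive smooth concordance invariant that detects the obstruction to writing $K$, modulo $\calt^1$, as a sum $K_1 \cs K_2 \cs K_3$ with $\Delta_{K_i}(t) = f_i(t)^{n_i}$. I would choose three pairwise coprime polynomials $f_i = q_i(t)q_i(t^{-1}) \in \calp$ with $q_i \in \calq$ irreducible, and build $K$ by an iterated satellite/infection construction on a topologically slice seed: pick infecting knots $J_1, J_2$ with $\Delta_{J_i}(t) = f_i(t)$, and infect along disjoint null-homologous curves $\eta_1, \eta_2$ in the complement of the seed. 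The infection formula then gives $\Delta_K = f_1 f_2$, and the standard Freedman-Quinn argument (the infection curves bound topologically flat disks in the slice-disk complement of the seed) shows that $K$ is topologically slice. Hence $K \in \calt^{\{f_1,f_2\}} \subseteq \calt^{\calp_0}$.

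A first reduction eliminates the role of $f_3$. If $K$ were concordant, modulo $\calt^1$, to $K_1 \cs K_2 \cs K_3$ with $\Delta_{K_i} = f_i^{n_i}$, then matching Alexander polynomials gives $f_1 f_2 = f_1^{n_1} f_2^{n_2} f_3^{n_3}$ in $\Z[t,t^{-1}]$. Pairwise coprimality of the $f_i$ and unique factorization force $n_1 = n_2 = 1$ and $n_3 = 0$; in particular $K_3$ has trivial Alexander polynomial, so $K_3 \in \calt^1$ can be absorbed, and the problem reduces to showing that $K$ is not concordant modulo $\calt^1$ to any $K_1 \cs K_2$ with $\Delta_{K_i} = f_i$.

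To obstruct this, I would use a smooth concordance invariant that is additive under connected sum and constrained by the Alexander polynomial. Natural candidates are the Upsilon function $\U$, the invariant $\nu^+$, or the $d$-invariants of prime-power branched covers (the last of which pairs most naturally with the prime-decomposition philosophy here). The infection construction affords precise control: by choosing $J_1, J_2$ and the linking pattern of $\eta_1, \eta_2$ carefully, one can engineer $\U_K$ to carry features (for instance, singular values or slope changes at specific parameter values) that exceed what $\U_{K_1} + \U_{K_2} + \U_T$ can realize for any $K_i$ with $\Delta_{K_i} = f_i$ and $T \in \calt^1$. Additivity under connected sum, combined with the hypothesized concordance, then forces an equality of invariants that contradicts the design.

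The main obstacle is this last step: one must prove a structure theorem bounding the complexity of the chosen invariant on each of the three classes $\calt^{f_1}, \calt^{f_2}, \calt^1$ in a genuinely rigid way, for otherwise the freedom available within each class could conspire to reproduce $\U_K$. Typically, such a bound on $\calt^{f_i}$ is tied to the degree of $f_i$ via the size of the knot Floer complex forced by the Alexander polynomial, while on $\calt^1$, where $\U$ is not identically zero, one needs a separately derived estimate for topologically slice knots with trivial Alexander polynomial. With both bounds in hand, a finite-dimensional matching argument rules out every admissible decomposition and completes the proof.
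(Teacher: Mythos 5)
Your outline correctly identifies the right shape of argument (an explicit topologically slice knot plus an additive smooth invariant, with the three-polynomial case reducing to two), and one of your candidate invariants --- $d$-invariants of branched covers --- is in fact the tool the paper uses. But the proposal has a genuine gap at exactly the point you flag as ``the main obstacle'': the structure theorem you would need does not exist for two of your three candidate invariants, and for the third you have not supplied the mechanism that makes it work. For $\U$ or $\nu^+$ the approach is not merely incomplete but unworkable: the problem lives in $\calt_\Delta = \calt/\calt^1$, so any obstruction must be controlled on $\calt^1$, and $\U$ restricted to $\calt^1$ already realizes an infinitely generated free group (this is essentially the content of the results of Hedden--Kim--Livingston and Hedden--Livingston--Ruberman cited in the introduction). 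There is no ``separately derived estimate'' bounding $\U$ on topologically slice knots with trivial Alexander polynomial, so the freedom within $\calt^1$ really can conspire to reproduce any putative $\U_K$. A second, smaller error: your reduction via ``matching Alexander polynomials'' assumes concordant knots have equal Alexander polynomials, whereas concordance only controls $\Delta$ up to Fox--Milnor factors $g(t)g(t^{-1})$; the exponents $n_i$ cannot be pinned down this way.

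The paper's resolution of both issues is to pass to the double branched cover $M(K)$ and work with rational homology cobordism. A knot in $\calt^1$ has double branched cover a $\Z$-homology sphere, so it contributes no nontrivial \Spc structures; and the hypothesized splitting $K \sim K_1 \cs K_2 \cs K_3$ translates, via $|H_1(M(K_i))| = |\Delta_{K_i}(-1)|$ and the identities $\phi_{2p}(-1)=p$, $\phi_{2q}(-1)=q$, $\phi_{2pq}(-1)=1$, into a rational homology cobordism from $M(K)$ to $M_1 \cs M_2$ with the $p$- and $q$-torsion separated. The key lemma (Theorem~\ref{thm:main:d}, following Kim--Livingston) is that such a cobordism forces $d(M, ipa+jqb) - d(M, ipa) - d(M, jqb)$ to be constant in $(i,j)$: the self-annihilating subgroup coming from the rational ball supplies vanishing relations in which the unknown constants $d(M_i,0)$ cancel. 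The proof is completed by an explicit computation of these $d$-invariants for $M = S^3_{225}(T(14,15) \cs 22D(T(2,3)))$ showing the quantity is not constant. Without this (or an equivalent) rigidity statement and the accompanying computation, your argument does not close.
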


The use of three factors is an artifact of the proof.  It will be clear that without the restriction of irreducibility for elements in $\calq$, we could have used two factors, as was stated in the abstract.    To be more precise, there is the following statement.

\begin{theorem}  There exist  Alexander polynomials $f_1(t)$ and $f_2(t)$ having no common factors and a topologically slice knot $K$ with $\Delta_K(t) = f_1(t)^2f_2(t)^2$ such that $K$ is not concordant to any connected sum of knots $K_1 \cs K_2$ where $\Delta_{K_i}(t) = f_i(t)^{n_i}$ and $n_1, n_2 \in \Z$.
\end{theorem}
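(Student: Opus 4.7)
The plan is to run the proof of Theorem~\ref{thm:main} with two, possibly reducible, factors in place of three irreducible ones. Choose three distinct irreducibles $q_1, q_2, q_3 \in \calq$ and set $f_1(t) = q_1(t) q_1(t^{-1})$ and $f_2(t) = q_2(t) q_2(t^{-1}) q_3(t) q_3(t^{-1})$. These are relatively prime symmetric polynomials, and by the theorem of Terasaka quoted in the introduction, each is the Alexander polynomial of a smoothly (hence topologically) slice knot. The irreducibility restriction on elements of $\calq$ used to formulate Theorem~\ref{thm:main} is purely for bookkeeping; the underlying construction goes through unchanged if one factor is allowed to be reducible, which is exactly what $f_2$ does here.

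Next, build the topologically slice knot $K$ with $\Delta_K = f_1^2 f_2^2$ using the construction underlying Theorem~\ref{thm:main}: combine carefully chosen topologically slice satellites so that each carries a prescribed smooth concordance obstruction in a specified primary. The even exponents arise naturally from satellite constructions, since companion and pattern each contribute a copy of the corresponding $\calp$-factor to the Alexander polynomial. Suppose for contradiction that $K$ is smoothly concordant to $K_1 \cs K_2$ with $\Delta_{K_i}(t) = f_i(t)^{n_i}$, and apply the additive smooth concordance obstruction used to prove Theorem~\ref{thm:main}---a Heegaard Floer numerical invariant graded by the primary factors of the Alexander polynomial, such as a $d$-invariant of a cyclic branched cover or values of $\U_K$ at preferred parameters. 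The Alexander-polynomial constraints on $K_1$ and $K_2$ force the contribution of each summand to lie in a prescribed subspace, and the combined value cannot match the value on $K$.

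The main obstacle is identical to that for Theorem~\ref{thm:main}: arranging the topologically slice input knots so that the Heegaard Floer obstruction on $K$ takes a value genuinely mixing the two primaries---in a way no two-summand splitting with the prescribed Alexander-polynomial support can reproduce. Once this is established in the three-factor setting, the two-factor statement follows by grouping $q_2$ and $q_3$ into a single reducible factor, exactly the artifact the author alludes to in the paragraph preceding this theorem.
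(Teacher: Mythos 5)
There is a genuine gap: your proposal correctly identifies the top-level reduction --- the two-factor statement follows from the three-factor construction by grouping two of the irreducible factors into one reducible $f_2$, which is exactly how the paper presents it --- but it then defers the entire mathematical content. You write that ``the main obstacle is identical to that for Theorem~\ref{thm:main}: arranging the topologically slice input knots so that the Heegaard Floer obstruction on $K$ takes a value genuinely mixing the two primaries \dots\ Once this is established \dots'' and never establish it. The paper's proof consists precisely of that step: the explicit knot $K_{15}$ bounding a punctured Klein bottle, with $\Delta_{K_{15}}(t) = \bigl((t^{15}+1)/(t+1)\bigr)^2 = \bigl(\phi_6(t)\phi_{10}(t)\phi_{30}(t)\bigr)^2$; the passage to the $2$--fold branched cover via $|H_1(M(K))| = |\Delta_K(-1)|$ together with $\phi_6(-1)=3$, $\phi_{10}(-1)=5$, $\phi_{30}(-1)=1$, which converts the concordance hypothesis into a rational homology cobordism from $M(K_{15})$ (with $H_1 \cong \Z_9\oplus\Z_{25}$) to $M_1\cs M_2$ with coprime homology; Theorem~\ref{thm:main:d}, which shows such a cobordism forces $d(M,ipa+jqb)-d(M,ipa)-d(M,jqb)$ to be independent of $i,j$; and the explicit computation (Table~\ref{table:d2}) for $M(K_{15}) = S^3_{225}(T(14,15)\cs 22D(T_{2,3}))$ showing it is not. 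None of these ingredients appears in your write-up, so no contradiction is actually derived.

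Two further points in your sketch would actively mislead you if pursued. First, you describe the construction as combining ``topologically slice satellites so that each carries a prescribed smooth concordance obstruction in a specified primary.'' Read literally, a knot assembled from summands each supported in a single primary is itself a connected sum of the forbidden type, so it cannot be a counterexample; the paper's $K_{15}$ is a single indecomposable example whose branched cover inextricably mixes the $3$-- and $5$--primary parts, and the obstruction of Theorem~\ref{thm:main:d} is not an ``additive invariant whose summand contributions lie in prescribed subspaces'' but a splitting constraint in which the unknown $d$--invariants of the hypothetical $M_1$ and $M_2$ are eliminated by taking second differences. Second, your normalization $f_1 = q_1(t)q_1(t^{-1})$, $f_2 = q_2(t)q_2(t^{-1})q_3(t)q_3(t^{-1})$ gives $\Delta_K = f_1^2f_2^2 \doteq (q_1q_2q_3)^4$ for symmetric $q_i$, whereas the example has $\Delta_{K_{15}} = (\phi_6\phi_{10}\phi_{30})^2$; the intended reading is $f_1 = \phi_6$, $f_2 = \phi_{10}\phi_{30}$, with the squares already recorded in the statement. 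Relatedly, the even exponent does not come from ``companion and pattern each contributing a factor'' --- the winding number two satellite formula contributes $\Delta_{J_n}(t^2)=1$ --- but from the Conway polynomial computation reducing $K_n(U)$ to $-T(2,n)\cs T(2,n)$.
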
 
Notice that it follows that $K$ is a topologically slice knot that is not smoothly concordant to a knot with Alexander polynomial one.  The first  examples of  such knots were described in~\cite{MR2955197}.  The example and proof here are closely related to that earlier work. 

\vskip.05in
\noindent{\it Acknowledgments} Thanks are due to Jae Choon Cha for sharing with me    drafts of his ongoing work on primary decompositions and for his repeatedly offering thoughtful commentary on this note.  Discussions with Se-Goo Kim and Taehee Kim were also very helpful.

\section{Rational homology cobordism and the construction of an example}

The proof of Theorem~\ref{thm:main} can be  reduced to a result concerning rational homology cobordism, as we now describe.  Recall first that the Alexander polynomial of a knot determines the order of the first  homology of $M(K)$,  the 2--fold cyclic branched cover of $S^3$ branched over $K$: $\big| H_1(M(K))\big| = \big|\Delta_K(-1)\big|$. Also, 2--fold branched covers of concordant knots are rationally homology cobordant. 

Figure~\ref{fig:knot} is a schematic diagram of a  topologically slice knot $K_n(J_n)$ (in the case of $n = 3$); we will be restricting to the case of  $n \equiv 3 \mod 4$.  These knots  bound       punctured Klein bottles in $S^3$.  In each, the left band  has framing 0,  there are   $n$ half-twists between the bands,  and the knot $J_n $ is the connected sum of $(3n-1)/4$ copies of the positive clasped untwisted double of the trefoil knot, $D(T(2,3))$.    Similar knots were used in~\cite{MR2955197} to prove the nontriviality of $\calt_\Delta = \calt /\calt^1$.  Henceforth, will write $K_n$ for $K_{n}(J_{n})$.   A quick calculation in Section~\ref{sec:alex} will show   that if $n=pq$ for odd primes $p$ and $q$, then  $$\Delta_{K_n}(t) = \big(    \phi_{2p}(t) \phi_{2q}(t)\phi_{2pq}(t)  \big)^2,$$ where $\phi_k(t)$ denotes the $k$--cyclotomic polynomial.    
\begin{figure}
\fig{.6}{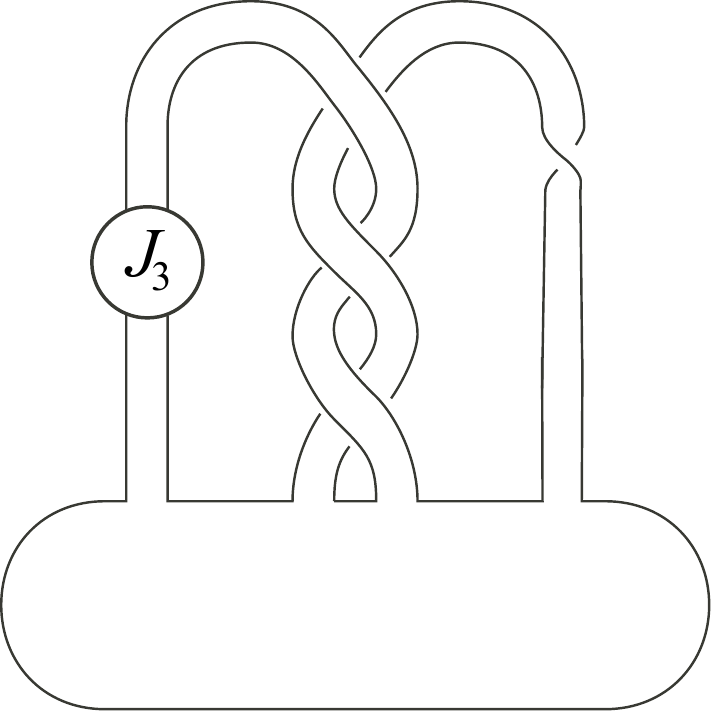}
\caption{Knot}\label{fig:knot}
\end{figure}

\begin{theorem} If $n = pq$, then the homology of the 2--fold branched cover of $K_n$ satisfies $\big| H_1(M(K_n))\big| = n^2$.  If $K_n$ is concordant to a connected sum $L_1 \cs L_2 \cs L_3$, with $\Delta_{L_1}(t) =  \phi_{2p}(t)^{m_1}$,    $\Delta_{L_2}(t) =  \phi_{2q}(t)^{m_2}$, and $\Delta_{L_3}(t) = \phi_{2pq}(t)^{m_3}$, then $M(K_n) $ is rationally homology cobordant to a connected sum $M_1 \cs M_2 \cs M_3$, where $\big|H_1(M_1)\big| = p^{m_1}$,  $\big|H_1(M_2) \big| = q^{m_2}$ and  $\big|H_1(M_3) \big| = 1$. 

\end{theorem}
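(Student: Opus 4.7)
The plan is to reduce both assertions to evaluating Alexander polynomials at $t = -1$, using the two standard facts the paper has already recalled: $|H_1(M(K))| = |\Delta_K(-1)|$, and 2--fold branched covers of concordant knots are rationally homology cobordant.

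For the first assertion, I would start from the factorization
\[
\Delta_{K_n}(t) = \bigl(\phi_{2p}(t)\,\phi_{2q}(t)\,\phi_{2pq}(t)\bigr)^2
\]
whose derivation is deferred to Section~\ref{sec:alex}, and compute each factor at $t=-1$. Using the relation $\phi_{2m}(t) = \phi_m(-t)$ for odd $m>1$, one has $\phi_{2p}(-1) = \phi_p(1) = p$ and $\phi_{2q}(-1) = q$, while $\phi_{2pq}(-1) = \phi_{pq}(1) = 1$ since $pq$ has more than one distinct prime factor. Multiplying and squaring gives $|\Delta_{K_n}(-1)| = (pq)^2 = n^2$, and the first conclusion follows from $|H_1(M(K_n))| = |\Delta_{K_n}(-1)|$.

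For the second assertion, the 2--fold branched cover distributes over connected sum, so $M(L_1 \cs L_2 \cs L_3) = M(L_1) \cs M(L_2) \cs M(L_3)$. Combined with the fact that concordant knots have rationally homology cobordant 2--fold branched covers, the hypothesis yields a rational homology cobordism from $M(K_n)$ to this connected sum. Setting $M_i = M(L_i)$, the orders $|H_1(M_i)|$ are now computed from $|\Delta_{L_i}(-1)|$ using exactly the same cyclotomic evaluations: $|H_1(M_1)| = p^{m_1}$, $|H_1(M_2)| = q^{m_2}$, and $|H_1(M_3)| = 1^{m_3} = 1$.

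There is no real obstacle here; the argument is essentially bookkeeping once one accepts the Alexander polynomial factorization of $K_n$ and the cyclotomic evaluations at $-1$. The only feature worth emphasizing, because it is what will power the main theorem later, is that $\phi_{2pq}(-1) = 1$: the third factor in the Alexander polynomial contributes nothing to the order of $H_1$ of the branched cover. This is what converts a three-factor Alexander splitting into a constraint involving only the two prime orders $p^{m_1}$ and $q^{m_2}$, and is the mechanism by which an obstruction to the hypothesized connected-sum decomposition can eventually be detected.
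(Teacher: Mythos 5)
Your proposal is correct and follows essentially the same route as the paper: both reduce everything to the evaluations $\phi_{2p}(-1)=p$, $\phi_{2q}(-1)=q$, $\phi_{2pq}(-1)=1$ together with the standard facts about $|H_1(M(K))|=|\Delta_K(-1)|$ and branched covers of concordant knots. The only cosmetic difference is that you obtain the cyclotomic evaluations via $\phi_{2m}(t)=\phi_m(-t)$, whereas the paper derives them from quotients of factorizations of $t^n-1$ and L'Hopital's rule in its cyclotomic lemma.
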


\begin{proof}  These all follow  immediately from the facts that $  \phi_{2p}(-1) = p$,    $  \phi_{2q}(-1)  =q $, and $ \phi_{2pq}(-1) = 1$.  See Lemma~\ref{lemma:cyclotomic} for details.

\end{proof}  

The topic of~\cite{MR3259765} was the general problem of finding a primary splitting  of the rational homology cobordism group.  We will show that the techniques used there can be applied to prove that for $n=15$, a rational homology  cobordism from $M(K_n)$ to such a connected sum, $M_1 \cs M_2 \cs M_3$, does not exist. Notice that if it did exist,  we could let $N_2 =M_2 \cs M_3$, and reduce  our  work  to obstructing the existence of a rational homology cobordism to a connected sum of two manifolds, $M_1 \cs N_2$.  Our goal will be to prove that $M(K_n)$ is not rational homology cobordant to any connected sum $M_1 \cs M_2$, where $\big|H_1(M_1)\big| = 3^{m_{1}}$ and   $\big|H_1(M_2)\big| = 5^{m_{2}}$ for some integers $m_1$ and $m_2$.

\section{Obstructions from $d$--invariants}

We use the following notation:  for any abelian group $G$ and prime integer $p$,   let $G_{(p)}$ denote the subgroup consisting of all elements of order $p^n$ for some $n$.

All the three-manifolds $M$ we will be   working with are $\Z_2$--homology three-spheres.  Since $H_1(M)$ is of odd order, there are natural identifications: $H_1(M)  \cong H^2(M) \cong \text{Spin}^c(M)$.  For $g \in H_1(M)$ we denote by $d(K, g)$ the Heegaard Floer correction term associated to $g$  viewed as a Spin$^c$--structure.   Basic results concerning $\text{Spin}^c(M)$, the $d$--invariant, and its basic properties are in~\cite{MR1957829}.   Further details and examples are provided in~\cite{MR3259765}.

If $M$ bounds a rational homology four-ball, then there is a subgroup $\calm \subset H_1(M)$ such that: $\big|\calm\big| ^2= \big|H_1(M)\big|$;  the nonsingular linking form vanishes on $\calm$; and $d(M, g) = 0$ for all $g \in \calm$.    We can equivalently view $\calm \subset H^2(M)$.

\begin{theorem}\label{thm:main:d}  Let $p$ and $q$ be distinct odd primes and let $M$ be a three-manifold satisfying $H_1(M) \cong \Z_{p^2} \oplus \Z_{q^2}$,  generated by elements $a$ and $b$ of order $p^2$ and $q^2$, respectively.  If  $M$ is rationally homology cobordant  to a connected sum $ M_1 \cs M_2$ where $H_1(M_1)_{(q)} = 0$ and $H_1(M_2)_{(p)} = 0$, then the value of    $$d(K,ipa +jqb) - d(K,ipa) - d(K,jqb)$$ is independent of $i$ and $j$. 
\end{theorem}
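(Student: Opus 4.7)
The plan is to promote the rational homology cobordism to a metabolizer statement for the linking form and then exploit its primary decomposition. Choose a rational homology $4$--ball $W$ with $\partial W = M \cs (-M_1) \cs (-M_2)$, and set $G \defeq H_1(M) \oplus H_1(M_1) \oplus H_1(M_2)$. The subgroup $\calm \defeq \ker(G \to H_1(W))$ is Lagrangian for the linking form ($|\calm|^2 = |G|$), and combining $d(\partial W, \mathfrak{s}) = 0$ on $\calm$ with additivity of $d$ under connected sum and $d(-N, \mathfrak{s}) = -d(N, \mathfrak{s})$ yields
$$d(M, x) = d(M_1, y_1) + d(M_2, y_2) \qquad \text{whenever } (x, y_1, y_2) \in \calm.$$
The linking form and $\calm$ both split over primary parts: $\calm = \Moplus_r \calm_{(r)}$. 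The hypotheses $H_1(M_1)_{(q)} = H_1(M_2)_{(p)} = 0$ then give
$$\calm_{(p)} \subset \Z_{p^2}\cdot a \oplus H_1(M_1)_{(p)}, \qquad \calm_{(q)} \subset \Z_{q^2}\cdot b \oplus H_1(M_2)_{(q)}.$$

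The crucial step is to show that the projection $\pi_p \co \calm_{(p)} \to \Z_{p^2}\cdot a$ has image containing the order-$p$ subgroup $\Z_p \cdot pa$ (and symmetrically for $\pi_q$). If $\pi_p(\calm_{(p)}) = 0$, then $\calm_{(p)}$ would embed as an isotropic subgroup of the nondegenerate linking form on $H_1(M_1)_{(p)}$, giving $|\calm_{(p)}|^2 \leq |H_1(M_1)_{(p)}|$; but the Lagrangian equation $|\calm_{(p)}|^2 = |G_{(p)}| = p^2\,|H_1(M_1)_{(p)}|$ would then force $p^2 \leq 1$, a contradiction. Since any nonzero subgroup of the cyclic group $\Z_{p^2}$ automatically contains its unique order-$p$ subgroup $\Z_p \cdot pa$, the claim follows.

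To conclude, for each $i \in \{0, \ldots, p-1\}$ pick $y_1^{(i)} \in H_1(M_1)_{(p)}$ with $(ipa,\, y_1^{(i)},\, 0) \in \calm$, and for each $j \in \{0, \ldots, q-1\}$ pick $y_2^{(j)} \in H_1(M_2)_{(q)}$ with $(jqb,\, 0,\, y_2^{(j)}) \in \calm$. Summing gives $(ipa + jqb,\, y_1^{(i)},\, y_2^{(j)}) \in \calm$, and three applications of the displayed identity yield
\begin{align*}
d(M, ipa + jqb) &= d(M_1, y_1^{(i)}) + d(M_2, y_2^{(j)}), \\
d(M, ipa) &= d(M_1, y_1^{(i)}) + d(M_2, 0), \\
d(M, jqb) &= d(M_1, 0) + d(M_2, y_2^{(j)}),
\end{align*}
whence
$$d(M, ipa + jqb) - d(M, ipa) - d(M, jqb) = -d(M_1, 0) - d(M_2, 0),$$
which is visibly independent of $i$ and $j$.

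The main obstacle is the linking-form counting argument guaranteeing that $\pi_p(\calm_{(p)})$ and $\pi_q(\calm_{(q)})$ are nontrivial; this is precisely where the separation hypotheses $H_1(M_1)_{(q)} = H_1(M_2)_{(p)} = 0$ enter, by forcing the $p$-- and $q$--primary pieces of $\calm$ into the advertised product form. Once the lifts $y_1^{(i)}$ and $y_2^{(j)}$ are in hand, the remainder is a direct computation.
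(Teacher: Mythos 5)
Your proposal is correct and follows essentially the same route as the paper's proof: produce the metabolizer $\calm$ from a rational homology ball bounded by $M \cs (-M_1) \cs (-M_2)$, use the self-annihilating (Lagrangian) order count on the $p$-- and $q$--primary parts to find elements of $\calm$ of the form $(ipa, y_1, 0)$ and $(jqb, 0, y_2)$, and then combine additivity and vanishing of $d$ on $\calm$ to get the constant $-d(M_1,0) - d(M_2,0)$. Your write-up is somewhat more explicit about the primary decomposition of $\calm$ and about why the projection to $\Z_{p^2}\cdot a$ must hit the order-$p$ subgroup, but the argument is the same.
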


\begin{proof}  Since $M$ and $M_1 \cs  M_2$  are rationally homology  concordant, $M \cs  -M_1 \cs  -M_2$ bounds a           rational homology $4$--ball $W$.  The image of $H^2(W)$   in $H\!:= H^2(M \cs -M_1 \cs -M_2)$ is the desired  subgroup $\calm$ satisfying $\big|\calm\big|^2 = \big|H\big|$.  Furthermore, $\calm$ is self-annihilating with respect to the nonsingular linking form.  The order of a self-annihilating subgroup of a group $G$ is of order at most $\sqrt{|G|}$; it follows that  the subgroup $\calm_{(p)} \subset H(M)_{(p)} \oplus H(-M_1)_{(p)}$ cannot be contained in $H(-M_1)_{(p)}$.  In particular, some element of the form $(x_p, y_p) \in  H(M)_{(p)} \oplus H(-M_1)_{(p)}$ with $x_p \ne 0 \in Z_{p^2}$ is contained in $\calm$.  By taking a multiple, we can assume $x_p = pa$.  Similarly, there is an element $(qb, y_q) \in  H(M) \oplus H(-M_2)$ in $\calm_q$.

Notice that we are viewing $H_1(-M_1) \subset H_1(-M_1) \oplus H_1(-M_2)$, so in this sense  $y_p$ can be interpreted as an ordered pair $(y_p, 0) \in H_1(-M_1) \oplus H(-M_2)$; similarly, $y_q$ represents an ordered pair   $(0,y_q) \in H_1(-M_1) \oplus H_1(-M_2)$. The correction term is additive under connected sum and vanishes for elements in $\calm$.  Thus for any $i$ and $j$:
$$d(M, ipa) - d(M_1, iy_p) - d(M_2,0)= 0,$$
$$d(M, jqb)- d(M_1,0) - d(M_2, jy_q) = 0,$$
and
$$d(M, ipa + jqb ) - d(M_1, iy_p)  - d(M_2, jy_q) = 0.$$
Subtracting the first two equations from the third shows that for all $i$ and $j$,
$$d(M, ipa + jqb ) - d(M, ipa)  - d(M, jqb) = -d(M_1, 0)   -d(M_2, 0).$$ The right hand side is independent of $i$ and $j$.

\end{proof}


\section{The knots $K_n(J_n)$  and their Alexander polynomials}\label{sec:alex}

As described in the introduction, we are considering the knots illustrated schematically in  Figure~\ref{fig:knot}.    

\begin{theorem}  Let $K_n = K_n(J_n)$. For  $n$   odd, the Alexander polynomial is given by $$\Delta_{K_n}(t) = \left( \frac{ t^{n}+1}{t+ 1}\right)^2.$$ \end{theorem}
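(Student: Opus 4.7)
The plan has two parts: first reduce the computation to that of $\Delta_{K_n(U)}(t)$ where $U$ is the unknot, then compute this directly.

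For the reduction, the knot $K_n = K_n(J_n)$ is obtained from $K_n(U)$ by infection, tying $J_n$ into one of the bands of the spanning Klein bottle. Since $D(T(2,3))$ is an untwisted Whitehead double of the trefoil, $\Delta_{D(T(2,3))}(t) = 1$, and therefore $\Delta_{J_n}(t) = 1$ by multiplicativity of the Alexander polynomial under connected sum. The standard satellite formula $\Delta_{K_n(J_n)}(t) = \Delta_{K_n(U)}(t) \cdot \Delta_{J_n}(t^{w})$, where $w$ is the winding number of the infection pattern in its solid-torus neighborhood, then yields $\Delta_{K_n(J_n)}(t) = \Delta_{K_n(U)}(t)$ regardless of $w$.

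For the direct computation, the knot $K_n(U)$ bounds a punctured Klein bottle with the framing data described in Figure~\ref{fig:knot}. I would pass to an orientable Seifert surface by either of two routes: (a) performing an isotopy that redistributes the $n$ half-twists between the bands onto the individual bands and decouples the $0$-framed band, exhibiting $K_n(U)$ as the boundary connected sum of two oppositely-twisted M\"obius bands, thereby identifying $K_n(U)$ with $T(2,n) \cs T(2,-n)$; since $\Delta_{T(2,n)}(t) = (t^n+1)/(t+1)$ for odd $n$ and the Alexander polynomial is multiplicative under connected sum (and invariant under mirroring, up to units), this gives $\Delta_{K_n(U)}(t) = \left(\frac{t^n+1}{t+1}\right)^2$; or (b) applying Seifert's algorithm to a diagram of $K_n(U)$ to produce a genus-$(n-1)$ orientable surface, writing down the resulting $(2n-2)\times (2n-2)$ Seifert matrix $V$, and factoring $\det(V - tV^T)$.

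The main obstacle is in the direct computation. Approach (a) is efficient if the handle slides go through cleanly, but it requires a careful check against the conventions of Figure~\ref{fig:knot} that the $0$-framed left band together with the $n$ half-twist region really does rearrange into exactly the two claimed torus-knot summands rather than into a knot that merely shares the same Alexander polynomial. Approach (b) is mechanical but requires setting up and reducing a matrix whose size grows with $n$. In either case, the structural reason that the Alexander polynomial is a perfect square should be the $\mathbb{Z}/2$ symmetry inherited from the orientation double cover of the Klein bottle, which forces the Seifert form to split accordingly.
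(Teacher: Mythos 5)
Your reduction step is exactly the paper's: both use that $\Delta_{J_n}=1$ together with the satellite formula to replace $K_n(J_n)$ by $K_n(U)$. The gap is in the second half, the actual computation of $\Delta_{K_n(U)}(t)$. Your route (a) asserts that the punctured Klein bottle can be rearranged so that $K_n(U)$ \emph{is} the connected sum $T(2,n)\cs T(2,-n)$. That is false: a simple manipulation identifies $K_n(U)$ with the pretzel knot $P(n,-n,n-1)$, and for odd $n\ge 3$ this is a prime knot with all three twist parameters nonzero, so it is not isotopic to any connected sum. The $n$ half-twists between the bands cannot simply be ``redistributed onto the individual bands'' to decouple them; the concern you flag yourself (that the rearranged knot might merely share the Alexander polynomial) is precisely what happens, and route (a) gives no mechanism to establish even that weaker coincidence. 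Route (b) would in principle work, but you have not produced the Seifert matrix or its determinant, so nothing is proved; and the heuristic about a $\Z/2$ symmetry forcing the Seifert form to split is not an argument.

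The missing idea, which is the paper's key device, is to work with the Conway polynomial and the skein relation $\nabla_+ - \nabla_- = -z\nabla_s$ applied at a crossing in the third band of $P(n,-n,n-1)$. Smoothing that crossing produces a split (two-component) link, whose Conway polynomial vanishes, so changing the crossing --- which removes two half-twists from the third band --- leaves $\nabla$ unchanged. Iterating (using that $n-1$ is even) reduces the third band to zero twists, and $P(n,-n,0)$ genuinely is $-T(2,n)\cs T(2,n)$; multiplicativity and $\Delta_{T(2,n)}(t)=(t^n+1)/(t+1)$ then finish the computation. This replaces your unjustified isotopy with an equality of polynomials that holds for the right reason: the knots are not isotopic, but they are related by crossing changes that the Conway polynomial cannot see in this situation.
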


\begin{proof}
This knot is a winding number two companion of $J_n$.  Since $J_n$ has Alexander polynomial one, a  standard formula for the Alexander polynomial of a satellite knot~\cite{MR0035436} implies  that the Alexander polynomial of $K_n(J_n)$ is the same as that for $K_n(U)$, where $U$ is the unknot.  In this case, a simple manipulation shows that  $K_n(U) = P(n, -n, n-1)$,   a three-stranded pretzel knot, illustrated in the case of $n=3$ in Figure~\ref{fig:pretzel}.  To compute its Alexander polynomial, we consider instead the Conway polynomial $\nabla_{K_n}(z)$.  (Recall that for an arbitrary oriented knot $K$, $\Delta_K(t) = \nabla_K(t^{1/2} - t^{-1/2})$.)

\begin{figure}[h]
\fig{.5}{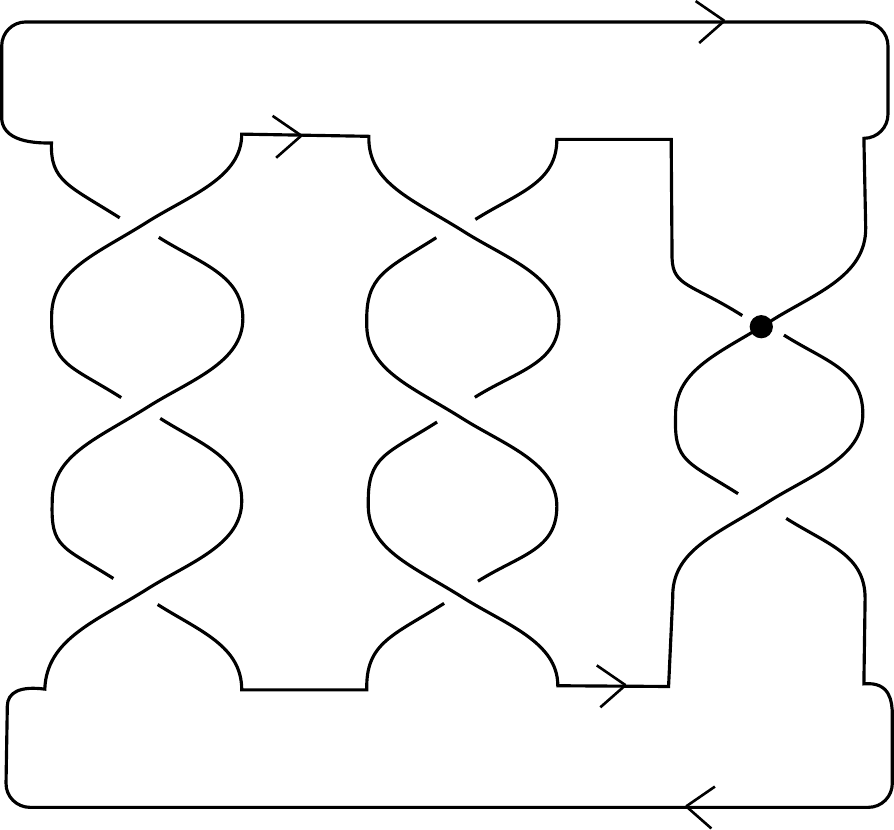}
\caption{The pretzel knot $P(3,-3,2)$}\label{fig:pretzel}
\end{figure}

The standard crossing change formula  for the Conway polynomial is $$\nabla_+(z) - \nabla_-(z) = -z \nabla_s(z),$$    where $\nabla_\pm$ denotes the Conway polynomial of  an oriented link with a specified crossing made positive or negative  and $\nabla_s$ is the Conway polynomial of the    link formed by smoothing that same crossing.  This can be applied to a crossing on the right-most band of the pretzel knot, indicated by the dot in Figure~\ref{fig:pretzel}.  Smoothing the crossing yields an unlink, which has  0 Conway polynomial.  Changing the crossing removes two half-twists.  Thus, $\nabla_{P(n, -n, n-1)}(z) =\nabla_{P(n, -n, n-3)}(z)$.  Since $n$ is odd, continuing in this way removes the right-most crossings, ultimately yielding the connected sum $-T(2,n) \cs T(2,n)$.   The Alexander polynomials of torus knots is well-known; in this case it is $$\Delta_{T(2,n)}(t) = \frac{(t^{2n} -1)(t-1)}{(t^2 - 1)(t^n -1)} = \frac{t^n +1}{t+1}.$$
\end{proof}

\begin{lemma}\label{lemma:cyclotomic} For distinct odd primes $p$ and $q$, there is the following identity, where the $\phi_i$ are cyclotomic polynomials.
$$ \frac{t^{pq} +1}{t+1} =  \phi_{2p}(t) \phi_{2q}(t) \phi_{2pq}(t).$$ Furthermore, $\phi_{2p}(-1) = p$, $\phi_{2q}(-1) = q$, and $\phi_{2pq}(-1) = 1$.
\end{lemma}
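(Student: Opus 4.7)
The plan is to prove both assertions by manipulating the universal factorization $t^m - 1 = \prod_{d\mid m} \phi_d(t)$.

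First I would establish the identity for $(t^{pq}+1)/(t+1)$. Writing
$$t^{pq} + 1 = \frac{t^{2pq} - 1}{t^{pq} - 1},$$
I factor numerator and denominator. The divisors of $2pq$ are exactly $1, 2, p, q, 2p, 2q, pq, 2pq$, and the divisors of $pq$ are $1, p, q, pq$. Cancelling the common cyclotomic factors $\phi_1, \phi_p, \phi_q, \phi_{pq}$ between the two products leaves
$$t^{pq} + 1 = \phi_2(t)\,\phi_{2p}(t)\,\phi_{2q}(t)\,\phi_{2pq}(t).$$
Since $\phi_2(t) = t+1$, dividing through gives the claimed factorization.

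Next I would compute the evaluations at $t = -1$ using the standard identity $\phi_{2m}(t) = \phi_m(-t)$ valid for odd $m > 1$. This yields
$$\phi_{2p}(-1) = \phi_p(1), \qquad \phi_{2q}(-1) = \phi_q(1), \qquad \phi_{2pq}(-1) = \phi_{pq}(1).$$
Now for a prime $r$, $\phi_r(x) = 1 + x + \cdots + x^{r-1}$, so $\phi_p(1) = p$ and $\phi_q(1) = q$. For the composite case, I would note that $\phi_{pq}(1) = 1$: the quickest way is to take the factorization $t^{pq}-1 = \phi_1(t)\phi_p(t)\phi_q(t)\phi_{pq}(t)$, rewrite $(t^{pq}-1)/(t-1) = 1 + t + \cdots + t^{pq-1}$, evaluate at $1$ to get $pq$, and observe that $\phi_p(1)\phi_q(1)\phi_{pq}(1) = pq$ forces $\phi_{pq}(1) = 1$.

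There is no genuine obstacle here; the only mild subtlety is remembering the identity $\phi_{2m}(t) = \phi_m(-t)$ for odd $m$, but if one prefers to avoid it, the evaluations at $-1$ can be obtained directly by specializing the factorization from the first step to $t = -1$ and using the known values of $\phi_m(1)$ for prime powers and squarefree composites.
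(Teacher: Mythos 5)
Your proof is correct. The factorization $(t^{pq}+1)/(t+1) = \phi_{2p}(t)\phi_{2q}(t)\phi_{2pq}(t)$ is obtained exactly as in the paper: divide the cyclotomic factorization of $t^{2pq}-1$ by that of $t^{pq}-1$ and then strip off $\phi_2(t) = t+1$. Where you diverge is in the evaluations at $t=-1$. The paper avoids any auxiliary cyclotomic identities: it applies L'Hopital's rule to $(t^{pq}+1)/(t+1)$ at $t=-1$ to get the product value $pq$, then repeats the same division trick with $t^{2p}-1$ and $t^p-1$ to identify $\phi_{2p}(t) = (t^p+1)/(t+1)$ and evaluates that at $-1$ by L'Hopital again, after which $\phi_{2pq}(-1)=1$ falls out by division. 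You instead invoke $\phi_{2m}(t) = \phi_m(-t)$ for odd $m>1$ and reduce everything to the standard values $\phi_r(1)=r$ for $r$ prime and $\phi_{pq}(1)=1$ (which you justify correctly by evaluating $1+t+\cdots+t^{pq-1}$ at $t=1$). Your route is slightly slicker if one is willing to quote the $\phi_{2m}(t)=\phi_m(-t)$ identity; the paper's is more self-contained, using only the $\prod_{d\mid m}\phi_d$ factorization and elementary calculus. Both are complete and correct.
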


\begin{proof}  The polynomial $t^n -1$ has factors $\phi_d(t)$ for all divisors $d$ of $n$.  Thus 

\begin{equation}{t^{2pq} -1} = \phi_{2pq}(t)\phi_{2p}(t)\phi_{2q}(t) \phi_{p}(t)\phi_{q}(t)\phi_2(t)\phi_1(t)
\end{equation}
and $${t^{ pq} -1} = \phi_{ pq}(t)\phi_{ p}(t)\phi_{ q}(t)\phi_1(t).$$ Dividing the first equation by the second, and then dividing by $\phi_2(t) = t +1$ yields   $$ \frac{t^{pq} +1}{t+1} =  \phi_{2pq}(t) \phi_{2p}(t) \phi_{2q}(t) .$$  

L'Hopital's rule can be used to determine that the left hand side evaluated at $-1$ is $pq$.  Thus, if we show $\phi_{2p}(-1) = p$, and, similarly, $\phi_{q}(-1) = q$, we are done.  Proceeding as before, $$t^{2p} -1 = \phi_{2p}(t) \phi_p(t) \phi_2(t) \phi_1(t) $$ and $$t^{p} -1 =  \phi_p(t)  \phi_1(t). $$  Dividing yields   $$\frac{ {t^p +1}}{t+1}  =  \phi_{2p}(t). $$  In this case, L'Hopital's rule shows that $\phi_{2p}(-1) = p$.

\end{proof}

\begin{corollary} If $n = pq$, where $p$ and $q$ are distinct odd primes, then $\Delta_{K_n(J_n)}(t) =\big( \phi_{2p}(t)\phi_{2q}(t)\phi_{2pq}(t)\big)^2$, where the $\phi_k(t)$ are cyclotomic polynomials.  For $r$ an  odd prime,  $\phi_{2r}(-1) = r$ and for  a product of two distinct odd primes, $\phi_{2pq}(-1) = 1$.
\end{corollary}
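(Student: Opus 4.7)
The plan is to observe that the corollary is an immediate consequence of the two preceding results combined. The preceding theorem computes $\Delta_{K_n(J_n)}(t) = \left(\frac{t^n+1}{t+1}\right)^2$ for odd $n$. Specializing to $n = pq$ with $p, q$ distinct odd primes, the corollary's polynomial identity
$$\Delta_{K_n(J_n)}(t) = \big(\phi_{2p}(t)\phi_{2q}(t)\phi_{2pq}(t)\big)^2$$
follows directly from squaring the cyclotomic factorization $\frac{t^{pq}+1}{t+1} = \phi_{2p}(t)\phi_{2q}(t)\phi_{2pq}(t)$ established in Lemma~\ref{lemma:cyclotomic}.

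For the evaluations, the statement $\phi_{2r}(-1) = r$ for odd primes $r$ is recorded verbatim in Lemma~\ref{lemma:cyclotomic}, so I would simply cite it. For the remaining claim $\phi_{2pq}(-1) = 1$, I would evaluate the factorization $\frac{t^{pq}+1}{t+1} = \phi_{2p}(t)\phi_{2q}(t)\phi_{2pq}(t)$ at $t = -1$. The left-hand side equals $pq$ by L'Hopital's rule (exactly as computed in the proof of the lemma), while the first two factors on the right contribute $p$ and $q$ respectively; dividing gives $\phi_{2pq}(-1) = 1$.

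There is no real obstacle here; the corollary exists mainly to package the polynomial factorization and the arithmetic values at $-1$ into a form convenient for the application to three-manifold homology (where $|H_1(M(K_n))| = |\Delta_{K_n}(-1)| = (pq)^2 = n^2$, matching the statement of the earlier theorem about $|H_1(M(K_n))|$). Accordingly, my write-up would be a two-line proof: invoke the theorem and lemma for the polynomial identity, invoke the lemma for $\phi_{2r}(-1) = r$, and evaluate the factorization at $-1$ to extract $\phi_{2pq}(-1) = 1$.
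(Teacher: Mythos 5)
Your proposal is correct and matches the paper's (implicit) argument exactly: the corollary carries no separate proof in the paper, being the immediate combination of the theorem giving $\Delta_{K_n}(t)=\left(\frac{t^n+1}{t+1}\right)^2$ with Lemma~\ref{lemma:cyclotomic}, whose own proof establishes the evaluations at $t=-1$ in precisely the way you describe.
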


\section{Computing $d(M(K), i)$; the completion of the proof of Theorem~\ref{thm:main}}

We will now restrict to the  case of $n = 15$.     The methods of~\cite{MR593626} apply to show that the  2--fold branched cover of $S^3$ branched over $K_n(J_n)$ is given by $15^{2}$--surgery on $T_{14,15} \cs 22D(T_{2,3})$.  We continue to denote $K_{15}(J_{15})$ by $K_{15}$  and denote its 2--fold  branched cover simply by $M$.  Note that $H_1(M) \cong \Z_{15^2}$.  

Suppose now that $K_{15}$ were concordant to a connected sum of three knots, one with Alexander polynomial $ \phi_{6}(t) ^{m_1} $, one with Alexander polynomial   $ \phi_{10}(t) ^{m_2}$, and one with Alexander polynomial $   \phi_{30}(t) ^{m_3}$.   Then, as described in the introduction, $M$ would by rationally homology cobordant to $M_1 \cs M_2$, where $H_1(M_1)_{(5)} = 0$ and $H_1(M_2)_{(3)} = 0$.  Thus,  Theorem~\ref{thm:main:d} would apply. 

In~\cite[Section 6]{MR2955197}, an algorithm is presented for computing the values of the $d$--invariants of $M(K_n(J_n))$.  Here is   the results of the computation; readers are referred to~\cite{MR2955197} for general background.  In Appendix~\ref{app:dcomp} we provide a summary of the details of this specific computation.  Since $H_1(M) \cong Z_{225}$, the order 9 subgroup is generated by $a = 25$ and the order 25 subgroup is generated by $b = 9$.   The values that result from the computation  of $d(M, ipa + jqb ) $ are as shown in Table~\ref{table:d}. 

\begin{table}[h]
\begin{center}
\begin{tabular}{ | c | c | c | c | } 
\hline
&     $i = 0  $        & $i=1$        &    $ i = 2$   \\ 
\hline
$j=0$         &          		22             &     		14       & 	14   \\ 
\hline
$j=1$         &              	18          &    		6       &  			20  \\ 
\hline
$j=2$         &           		10              &   		16      & 			22  \\ 
\hline
$j=3$         &          		10               &    	 22     & 		16   \\ 
\hline
$j=4$         &         		18                &     	20       	&	 	6   \\ 
\hline
\end{tabular}
\vskip.1in
\end{center}
\caption{Values of  $d(M, ipa + jqb )  $}\label{table:d}
\end{table}

The values of  $d(M, ipa + jqb ) - d(M, ipa)  - d(M, jqb)$ are as shown (with sign reversed for readability) in Table~\ref{table:d2};  they are not all equal.

\begin{table}[h]
\begin{center}
\begin{tabular}{ | c | c | c | c | } 
\hline
&     $i = 0  $        & $i=1$        &    $ i = 2$   \\ 
\hline
$j=0$         &          		22             &     		22      & 	22  \\ 
\hline
$j=1$         &              	22          &    		26       &  			12  \\ 
\hline
$j=2$         &           		22             &   		8      & 			2  \\ 
\hline
$j=3$         &          		22              &    	 2     & 		8  \\ 
\hline
$j=4$         &         		22               &     	12       	&	 	26   \\ 
\hline
\end{tabular}
\vskip.1in
\end{center}
\caption{Values of  $-\big(d(M, ipa + jqb ) - d(M, ipa)  - d(M, jqb)\big)$}\label{table:d2}
\end{table}

\subsection{Infinite families}  Let $\{p_i\}$ be an infinite increasing sequence of  primes for which $p_i \equiv 3 \mod 4$ if and only if $i$ is odd.  Let $\calp_0 = \cup_{i =1  }^\infty  \{\phi_{2p_i -1 }(t), \phi_{2p_i  }(t), \phi_{2 p_{2i-1} p_{2i}}(t)\}$.  Most of the previous argument is easily generalized.  The only step that we have not been able to complete in general is the computation of the $d$--invariants.   Our expectation is that this would lead to the conclusion that $$ \oplus_{p \in \calp_0} \calt^p_\Delta \to  \calt^{\calp_{0}}_\Delta$$ is not surjective.


\appendix
\section{Computation of $d$--invariants}\label{app:dcomp}

Here we describe the computation of the $d$--invariants for  the 2--fold branched cover of $S^3$  branched over $K_{15}(J_{15})$.  A related example was presented in~\cite[Section 6]{MR2955197} with further background material  but lacking a few of the details that we provide here.   

For this knot, recall that $J_{15}$ is the connected sum of 11 copies of the untwisted Whitehead double of the trefoil knot, $Wh(T(2,3))$.  The 2--fold branched  cover, which we denote by $M_{15}$, can be described as $15^2$ surgery on the knot $L = T(14,15) \cs_{22}Wh(T(2,3))$.  That is, $M_{15} = S^3_{15^2}(L)$.   

Our goal is to compute the $d$--invariants  $d(M_{15} , m)$, which in~\cite{MR2955197} were   denoted $d(M_{15}, \spinc_m)$.   The first step is to determine  the Heegaard Floer knot complex  $ \cfk(L)$.  This is a chain complex with coefficients in  $\F$,  the field with two elements.  It  is  $\Z$--graded, supports  two increasing filtrations, and is a free $\F[U, U^{-1}]$--module.  The action of $U$ lowers gradings by 2 and filtration levels by 1. 

According to~\cite{MR2065507}, complexes of connected sums of knots are   the tensor products of the corresponding complexes for the individual knots, so we need first to describe $\cfk(T(14,15))$ and $\cfk(\cs_{22}Wh(T(2,3)))$.  In~\cite{MR2955197} it is shown that these are of the form   $\left(C_1 \otimes \F[U,U^{-1}]\right) \oplus A_1$  and $\left(C_2 \otimes \F[U,U^{-1}]\right) \oplus A_2$, where $A_1$ and $A_2$ are acyclic.   The acyclic summands do not affect the value of the $d$--invariant of surgery on the knots, so can be ignored.   Both $C_1$ and $C_2$ are {\it stairway} complexes; in particular, they are freely generated by elements of grading 0   and of grading 1.  Each has one dimensional homology, and that homology is at grading 0.  All the grading 0 generators are homologous cycles.   

For the complex $C_1$, the grading 0 generators   have bifiltration levels given by the following set, along with the symmetric values; for example, since $(0,105)$ is listed, there is also a generator at bifiltration level $(105,0)$.  There are 15 generators; the following eight and their reflections: 
$$\{(0, 105), (1, 91), (3, 78), (6, 66), (10, 55), (15, 45), (21, 
36), (28, 28)\}.$$

The corresponding list of the 23 generators of  $C_2$ are given in the following list, where we present one element from each symmetric pair.

$$\{ (0, 22), (1, 21), (2, 20), (3, 19), (4, 18), (5, 17), (6, 16), (7, 
15), (8, 14), (9, 13), (10, 12), (11, 11)\}.$$

The tensor product of the two complexes has $15 \times 23 = 345$ generators of grading 0, all of which are cycles representing the generator of homology.  The set of all bifiltration levels of these generators is formed by taking all possible sums of the bifiltration levels from each set.    Call the set of these bifiltration levels $\cals$.

In~\cite[Theorem 5.3, Section 6]{MR2955197}, it is described how    the value of $d$--invariant $d(M, m)$ is computed using these generators.  Here is a concise summary.   For any $m$ satisfying $|m| \le 112$, for each generator at filtration level $(\alpha, \beta)$, one computes the value of the function  $\Psi(\alpha, \beta)$ defined by 
$$
\Psi(\alpha , \beta) =
\begin{cases}
\beta -m, &\text{if\ \ } \beta -\alpha \ge m,\\
\alpha, &\text{if\ \ } \beta-\alpha < m.\\
\end{cases}
$$
Next, one lets $$\delta_m(\cals) = \min\{\Psi(s)\ \big| \  s \in \cals\}.$$   The next result presents the final result that is needed to complete the computation.

\begin{theorem}  For   $m$ satisfying $ |m| \le 112$,  $$d(M, m) = -2\delta(m) - \frac{-(2m - 225)^2 + 225}{(4)(225)}.$$
\end{theorem}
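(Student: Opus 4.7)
The plan is to recognize this as an application of the Ozsv\'ath--Szab\'o large surgery formula, specialized to the realization $M_{15} = S^3_{225}(L)$ from~\cite{MR593626} with $L = T(14,15) \cs_{22} Wh(T(2,3))$. First I would verify the Seifert genus: the torus knot factor contributes $g(T(14,15)) = (13)(14)/2 = 91$, and each of the $22$ untwisted Whitehead doubles of the trefoil contributes genus one, so $g(L) = 91 + 22 = 113$. Thus $n = 225 = 2g(L) - 1$ sits exactly at the threshold where the large surgery identification $CF^+(S^3_n(L), m) \simeq A_m^+$ is guaranteed for every Spin$^c$ structure with $|m| \le (n-1)/2 = 112$, which is precisely the stated range.

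From this identification one obtains the standard decomposition
$$d(S^3_n(L), m) = d(L(n,1), m) - 2V_m(L).$$
The lens-space term, with the indexing conventions used in~\cite{MR2955197}, is $d(L(225,1), m) = -\frac{-(2m-225)^2 + 225}{4 \cdot 225}$, accounting for the closed-form correction in the theorem. For the other term I would invoke the K\"unneth formula of~\cite{MR2065507} to write $\cfk(L) = \cfk(T(14,15)) \otimes \cfk(\cs_{22} Wh(T(2,3)))$. Since the tensor product of any chain complex with an acyclic one is itself acyclic, the acyclic summands $A_1$ and $A_2$ contribute nothing to $V_m(L)$, and the computation reduces to $C_1 \otimes C_2 \otimes \F[U,U^{-1}]$, whose $15 \times 23 = 345$ grading-zero generators are mutually homologous cycles at exactly the bifiltration levels in $\cals$.

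The last step is to show that $\Psi(\alpha, \beta) = \max(\alpha, \beta - m)$, which is the minimum $k \ge 0$ for which $U^k \cdot x$ lies in $A_m^+ = \cfk\{\max(i, j-m) \ge 0\}$; the two cases in the piecewise definition of $\Psi$ correspond to whether $\alpha$ or $\beta - m$ realizes the maximum. Minimizing over the grading-zero generators produces a cycle representing the bottom of the $U$-tower in $H_*(A_m^+)$ at grading $-2\delta_m(\cals)$, giving $V_m(L) = \delta_m(\cals)$ and hence the formula. The main obstacle is bookkeeping: consistently matching Spin$^c$ indexing between the surgery description of $M_{15}$, the lens-space formula, and the bifiltration indexing of $C_1$ and $C_2$. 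This matching is carried out carefully for an analogous knot in~\cite[Section 6]{MR2955197}, and the present result is essentially a direct transcription of that argument for the specific knot $L$ at hand.
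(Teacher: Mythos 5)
Your proposal is correct and follows essentially the same route the paper relies on: the paper gives no independent proof of this statement, deferring entirely to \cite[Theorem 5.3, Section 6]{MR2955197}, and your argument---large surgery formula at the threshold $225 = 2g(L)-1$, the lens-space correction term, K\"unneth reduction to $C_1\otimes C_2$, and the identification $V_m(L)=\delta_m(\cals)$ via $\Psi(\alpha,\beta)=\max(\alpha,\beta-m)$---is precisely the content of that cited result specialized to $L = T(14,15)\cs_{22}Wh(T(2,3))$. The genus bookkeeping ($91+22=113$, so $2g(L)-1=225$) correctly verifies the hypothesis that the paper leaves implicit.
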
 

With these results, the computer computations of the values in Table~\ref{table:d} are straightforward.




\begin{bibdiv}
\begin{biblist}

\bib{MR593626}{article}{
      author={Akbulut, Selman},
      author={Kirby, Robion},
       title={Branched covers of surfaces in {$4$}-manifolds},
        date={1979/80},
        ISSN={0025-5831},
     journal={Math. Ann.},
      volume={252},
      number={2},
       pages={111\ndash 131},
         url={https://mathscinet.ams.org/mathscinet-getitem?mr=593626},
      review={\MR{593626}},
}

\bib{MR900252}{incollection}{
      author={Casson, A.~J.},
      author={Gordon, C.~McA.},
       title={Cobordism of classical knots},
        date={1986},
   booktitle={\`a la recherche de la topologie perdue},
      series={Progr. Math.},
      volume={62},
   publisher={Birkh\"auser Boston, Boston, MA},
       pages={181\ndash 199},
        note={With an appendix by P. M. Gilmer},
      review={\MR{900252}},
}

\bib{arxiv:1910.14629}{article}{
      author={Cha, Jae~Choon},
       title={Primary decomposition in the smooth concordance group of
  topologically slice knots},
        date={2019},
     journal={arxiv.org},
        ISSN={abs/math/},
      eprint={arxiv.org/abs/math/1910.14629},
         url={https://arxiv.org/abs/math/1910.14629},
}

\bib{MR976591}{article}{
      author={Cochran, Tim~D.},
      author={Gompf, Robert~E.},
       title={Applications of {D}onaldson's theorems to classical knot
  concordance, homology {$3$}-spheres and property {$P$}},
        date={1988},
        ISSN={0040-9383},
     journal={Topology},
      volume={27},
      number={4},
       pages={495\ndash 512},
         url={https://mathscinet.ams.org/mathscinet-getitem?mr=976591},
      review={\MR{976591}},
}

\bib{MR1334309}{article}{
      author={Endo, Hisaaki},
       title={Linear independence of topologically slice knots in the smooth
  cobordism group},
        date={1995},
        ISSN={0166-8641},
     journal={Topology Appl.},
      volume={63},
      number={3},
       pages={257\ndash 262},
         url={https://mathscinet.ams.org/mathscinet-getitem?mr=1334309},
      review={\MR{1334309}},
}

\bib{MR0211392}{article}{
      author={Fox, Ralph~H.},
      author={Milnor, John~W.},
       title={Singularities of {$2$}-spheres in {$4$}-space and cobordism of
  knots},
        date={1966},
        ISSN={0030-6126},
     journal={Osaka J. Math.},
      volume={3},
       pages={257\ndash 267},
         url={https://mathscinet.ams.org/mathscinet-getitem?mr=0211392},
      review={\MR{0211392}},
}

\bib{MR1201584}{book}{
      author={Freedman, Michael~H.},
      author={Quinn, Frank},
       title={Topology of 4-manifolds},
      series={Princeton Mathematical Series},
   publisher={Princeton University Press, Princeton, NJ},
        date={1990},
      volume={39},
        ISBN={0-691-08577-3},
         url={https://mathscinet.ams.org/mathscinet-getitem?mr=1201584},
      review={\MR{1201584}},
}

\bib{MR679066}{article}{
      author={Freedman, Michael~Hartley},
       title={The topology of four-dimensional manifolds},
        date={1982},
        ISSN={0022-040X},
     journal={J. Differential Geom.},
      volume={17},
      number={3},
       pages={357\ndash 453},
         url={https://mathscinet.ams.org/mathscinet-getitem?mr=679066},
      review={\MR{679066}},
}

\bib{MR3466802}{article}{
      author={Hedden, Matthew},
      author={Kim, Se-Goo},
      author={Livingston, Charles},
       title={Topologically slice knots of smooth concordance order two},
        date={2016},
        ISSN={0022-040X},
     journal={J. Differential Geom.},
      volume={102},
      number={3},
       pages={353\ndash 393},
         url={https://mathscinet.ams.org/mathscinet-getitem?mr=3466802},
      review={\MR{3466802}},
}

\bib{MR2955197}{article}{
      author={Hedden, Matthew},
      author={Livingston, Charles},
      author={Ruberman, Daniel},
       title={Topologically slice knots with nontrivial {A}lexander
  polynomial},
        date={2012},
        ISSN={0001-8708},
     journal={Adv. Math.},
      volume={231},
      number={2},
       pages={913\ndash 939},
         url={https://mathscinet.ams.org/mathscinet-getitem?mr=2955197},
      review={\MR{2955197}},
}

\bib{MR3259765}{article}{
      author={Kim, Se-Goo},
      author={Livingston, Charles},
       title={Nonsplittability of the rational homology cobordism group of
  3-manifolds},
        date={2014},
        ISSN={0030-8730},
     journal={Pacific J. Math.},
      volume={271},
      number={1},
       pages={183\ndash 211},
         url={https://mathscinet.ams.org/mathscinet-getitem?mr=3259765},
      review={\MR{3259765}},
}

\bib{MR0253348}{article}{
      author={Levine, J.},
       title={Invariants of knot cobordism},
        date={1969},
        ISSN={0020-9910},
     journal={Invent. Math. 8 (1969), 98--110; addendum, ibid.},
      volume={8},
       pages={355},
         url={https://doi.org/10.1007/BF01404613},
      review={\MR{0253348}},
}

\bib{MR1957829}{article}{
      author={Ozsv{\'a}th, Peter},
      author={Szab{\'o}, Zolt{\'a}n},
       title={Absolutely graded {F}loer homologies and intersection forms for
  four-manifolds with boundary},
        date={2003},
        ISSN={0001-8708},
     journal={Adv. Math.},
      volume={173},
      number={2},
       pages={179\ndash 261},
         url={https://doi.org/10.1016/S0001-8708(02)00030-0},
      review={\MR{1957829}},
}

\bib{MR2065507}{article}{
      author={Ozsv\'{a}th, Peter},
      author={Szab\'{o}, Zolt\'{a}n},
       title={Holomorphic disks and knot invariants},
        date={2004},
        ISSN={0001-8708},
     journal={Adv. Math.},
      volume={186},
      number={1},
       pages={58\ndash 116},
         url={https://mathscinet.ams.org/mathscinet-getitem?mr=2065507},
      review={\MR{2065507}},
}

\bib{MR0035436}{article}{
      author={Seifert, H.},
       title={On the homology invariants of knots},
        date={1950},
        ISSN={0033-5606},
     journal={Quart. J. Math., Oxford Ser. (2)},
      volume={1},
       pages={23\ndash 32},
         url={https://mathscinet.ams.org/mathscinet-getitem?mr=0035436},
      review={\MR{0035436}},
}

\bib{MR0467764}{article}{
      author={Stoltzfus, Neal~W.},
       title={Unraveling the integral knot concordance group},
        date={1977},
        ISSN={0065-9266},
     journal={Mem. Amer. Math. Soc.},
      volume={12},
      number={192},
       pages={iv+91},
         url={https://mathscinet.ams.org/mathscinet-getitem?mr=0467764},
      review={\MR{0467764}},
}

\bib{MR0117736}{article}{
      author={Terasaka, Hidetaka},
       title={On null-equivalent knots},
        date={1959},
     journal={Osaka Math. J.},
      volume={11},
       pages={95\ndash 113},
         url={https://mathscinet.ams.org/mathscinet-getitem?mr=0117736},
      review={\MR{0117736}},
}

\end{biblist}
\end{bibdiv}

\end{document}